\pgfplotsset{compat=1.16}
\tikzset{external/system call={pdflatex \tikzexternalcheckshellescape -interaction=batchmode -jobname "\image" "\texsource"; rm "\image".dpth "\image".log "\image".nlo "\image".atfi "\image".spl}}
\newif\ifusetikz
\title{On the solution existence for collocation discretizations of  time-fractional subdiffusion equations}
\author{Sebastian Franz\footnote{Institute of Scientific Computing, Technische Universit\"at Dresden, Germany.
          \mbox{e-mail}: sebastian.franz@tu-dresden.de,
          orcidID: {0000-0002-2458-1597}} \and 
        Natalia Kopteva\footnote{Department of Mathematics and Statistics, University of Limerick, Ireland.
          \mbox{e-mail}: natalia.kopteva@ul.ie,
          orcidID: {0000-0001-7477-6926}}}
\date{\today}
\renewcommand*\env@matrix[1][r]{\hskip -\arraycolsep
  \let\@ifnextchar\new@ifnextchar
  \array{*\c@MaxMatrixCols #1}}
\definecolor{pass}{rgb}{0,0,0.7}
\newcommand{\pt}{\partial}
\newcommand{\R}{\mathbb{R}}
\newcommand{\pmtrx}[1]{\ensuremath{\begin{pmatrix}#1 \end{pmatrix}}}
\newcommand{\vmtrx}[1]{\ensuremath{\begin{vmatrix}#1 \end{vmatrix}}}
\newcommand{\LL}{{\mathcal L}}
\renewcommand{\AA}{{\mathcal A}}
\definecolor{pass}{rgb}{0,0,0.8}
\definecolor{pass1}{rgb}{0,0,0.5}
\tiny\color{gray},                    
\theoremstyle{plain}
\newtheorem{theorem}{Theorem}[section]
\newtheorem{lemma}[theorem]{Lemma}
\newtheorem{corollary}[theorem]{Corollary}
\newtheorem{remark}[theorem]{Remark}
\newcommand{\new}[1]{{\color{blue!30!black}#1}}
\begin{document}
  \maketitle

    \begin{abstract}
      Time-fractional parabolic equations with a Caputo time derivative of order $\alpha\in(0,1)$ are discretized
      in time using continuous collocation methods.
      For such discretizations, we give sufficient conditions for existence and uniqueness of their solutions. Two approaches are explored: the Lax-Milgram Theorem and the eigenfunction expansion.
      The resulting sufficient conditions, which involve certain $m\times m$ matrices (where $m$ is the order of the collocation scheme),  are verified
      both
       analytically, for all $m\ge 1$ and all sets of collocation points,  and computationally, for all $ m\le 20$.
      The semilinear case is also addressed.
    \end{abstract}

    \textit{AMS subject classification (2010):} 65M70
    
    \textit{Key words:} time-fractional, subdiffusion, higher order, collocation, existence

  \section{Introduction}\label{sec:intro}

  We are interested in continuous collocation discretizations in time for
  time-fractional parabolic problems, of order $\alpha\in(0,1)$, of the form
  \begin{equation}\label{eq:problem}
    \pt_t^{\alpha}u+\LL u=f(x,t)\qquad\mbox{for}\;\;(x,t)\in\Omega\times(0,T],
  \end{equation}
  also known as time-fractional subdiffusion equations.
  This equation is posed in a bounded Lipschitz domain  $\Omega\subset\R^d$ (where $d\in\{1,2,3\}$),
  subject to an initial condition $u(\cdot,0)=u_0$ in $\Omega$, and the boundary condition $u=0$ on $\pt\Omega$ for $t>0$.
  The spatial operator $\LL$  is a linear second-order elliptic operator:
  \begin{equation} \label{LL_def}
    \LL u := \sum_{k=1}^d \Bigl\{-\pt_{x_k}\!(a_k(x)\,\pt_{x_k}\!u) + b_k(x)\, \pt_{x_k}\!u \Bigr\}+c(x)\, u,
  \end{equation}
  with sufficiently smooth coefficients $\{a_k\}$, $\{b_k\}$ and $c$ in $C(\bar\Omega)$, for which we assume that $a_k>0$ in $\bar\Omega$,
  and also either $c\ge 0$ or $c-\frac12\sum_{k=1}^d\pt_{x_k}\!b_k\ge 0$.
  We also use $\partial_t^\alpha$, the Caputo fractional derivative in time \cite{Diet10}, defined
  for $t>0$, by
  \begin{equation}\label{eq:CaputoEquiv}
    \pt_t^{\alpha} u := 
    \frac1{\Gamma(1-\alpha)} \int_{0}^t(t-s)^{-\alpha}\, \partial_s\new{u}(\cdot, s)\, ds,
  \end{equation}
  where $\Gamma(\cdot)$ is the Gamma function, and $\pt_t$ denotes the classical first-order partial derivative in~$t$.

  In two recent publications~\cite{FrK23, FrK23ENUMATH}, we considered high-order continuous collocation discretizations for solving \eqref{eq:problem}
  in the context of a-posteriori error estimation and
  reliable adaptive time stepping algorithms.
  It should be noted that despite a substantial literature on the a-priori error bounds for problem of type~\eqref{eq:problem}, both
 on uniform
  and graded temporal meshes---see, e.g., \cite{JLZ19,KopMC19,Kopteva21,Kopteva_Meng,Liao_etal_sinum2018,SORG17} and references therein---%
the a-priori error analysis of the collocation methods appears very problematic. 
By contrast, in combination with
the adaptive time stepping algorithm (based on the theory in \cite{Kopteva22}),
high-order collocation discretizations (of order up to as high as 8),
were shown to
yield reliable computed solutions and attain optimal convergence rates  in the presence of solution singularities.

  %
  Note also that the a-posteriori error analysis in \cite{Kopteva22}, as well as in related papers \cite{FrK23,FrK23ENUMATH,Kopt_Stynes_apost22},
  is carried out under the assumption that there exists a computed solution, the accuracy of which is being estimated (as well as there exists a solution
  of the original problem); in other words, it was assumed that the discrete problem was well-posed at each time level.
  While this assumption is immediately satisfied by the L1 method (as well as by a few L2 methods,
  which may be viewed as generalizations of multistep methods for ordinary differential equations),
  the well-posedness of high-order collocation schemes appears to be an open problem, to which we devote this article.

  Consider a continuous collocation method of order $m\ge 1$ for \eqref{eq:problem}
  (see, e.g.,~\cite{Brunner04}), associated with an arbitrary temporal grid $0=t_0<t_1<\dots<t_M=T$.
  We require our computed solution $U$ to be continuous in time on $[0,T]$,  a polynomial of degree $m$ in time on each $[t_{k-1},t_k]$, and satisfy
  \begin{equation} \label{Col_method}
    (\partial_t^\alpha+\LL)\,U(x, t^\ell_k)=f(x, t_k^\ell)\qquad \mbox{for}\;\; x\in\Omega,\;\;  \ell\in\{1,\ldots, m\},\;\;k=1\ldots, M,
  \end{equation}
  subject to the initial condition $U^0:=u_0$, and the boundary condition $U=0$ on $\pt \Omega$.
  Here a subgrid of collocation points $\{t_k^\ell\}_{\ell\in\{0,\dots,m\}}$ is used on each time
  interval $[t_{k-1},t_k]$ with $t_k^\ell:=t_{k-1}+\theta_\ell\cdot(t_k-t_{k-1})$, and
  \new{$0=\theta_0<\dots<\theta_m\leq1$.} 

  Strictly speaking, \eqref{Col_method} gives a semi-discretization of problem \eqref{eq:problem} in time,
  while applying any standard finite difference or finite element spatial discretization to \eqref{Col_method}
  will yield a so-called full discretization. The latter can typically be described by a version of
  \eqref{Col_method}, with $\Omega$, $\LL$, and $f$ respectively replaced by the corresponding  set $\Omega_h$ of
  interior mesh nodes, the corresponding discrete operator $\LL_h$, and the appropriate discrete right-hand side $f_h$.

  For $m=1$ the above collocation method is identical with the L1 method. In fact, then \eqref{Col_method} involves one elliptic equation at each time level $t_k$,
  with the elliptic operator $\LL+ \tau_k^{-\alpha}/\Gamma(2-\alpha)$,
  where $\tau_k:=t_k-t_{k-1}$ (or the corresponding discrete spatial operator $\LL_h+\tau_k^{-\alpha}/\Gamma(2-\alpha)$).
  Hence, using a standard argument (see, e.g., \cite[Lemma~2.1(i)]{kopteva_semilin}) one concludes that
  $c(\cdot)+\tau_k^{-\alpha}/\Gamma(2-\alpha)\ge 0$ in $\Omega$, $\forall\,k\ge 1$, is sufficient for existence and uniqueness of a solution $U$ of \eqref{Col_method}.

  Once $m\ge 2$, one needs to solve a system of $m$ coupled elliptic equations at each time level (or a spatial discretization of such a system), so the well-posedness of such systems is less clear.
  By contrast, one can easily ensure the well-posedness in the case of a time-fractional ordinary differential equation
  (i.e. if $\LL u$ in \eqref{eq:problem} is replaced by $c(t)u$) by simply choosing the time steps sufficiently small (see section~\ref{ssec_ODE}).
  This approach is, however, not applicable to subdiffusion equations (as discussed Remark~\ref{rem_inapplble}), since
  the eigenvalues of second-order elliptic operators are typically unbounded.
  Hence, in this paper we shall explore alternative approaches to establishing the well-posedness of~\eqref{Col_method}.

  The paper is organised as follows. We start our analysis by
  deriving, in section~\ref{sec_matrix}, a matrix representation of the collocation scheme \eqref{Col_method}.
  Next, in section~\ref{sec_LMil},
  sufficient conditions for the well-posedness of \eqref{Col_method} are obtained
  by means of the Lax-Milgram Theorem, with a detailed application for \mbox{$m=2$}.
  Furthermore, alternative sufficient conditions for the well-posedness of \eqref{Col_method}
  are formulated in section~\ref{sec_lambda}
  in terms of an eigenvalue test for a certain $m\times m$ matrix.
This test is then used in section~\ref{ssec_analy} to prove the existence of collocation solutions for all $m\ge 1$
and all sets of collocation points $\{\theta_\ell\}_{\ell=1}^m$; see our main result, Theorem~\ref{the_cor}.
  This test can also be applied computationally, as illustrated in section~\ref{ssec_comp}
  for all $ m \le 20$.
  Finally, in section \ref{sec_semi}, we present extensions to the semilinear case.
\smallskip

  \textit{Notation.}
  We use the standard spaces $L_2(\Omega)$ and   $H^1_0(\Omega)$, the latter denoting the space of functions in the Sobolev space $W_2^1(\Omega)$ vanishing on $\pt\Omega$, as well as its dual space $H^{-1}(\Omega)$.
  The notation of type $(L_2(\Omega))^m$ is used for the space of vector-valued functions
  $\Omega\rightarrow \R^m$ with all vector components in $L_2(\Omega)$, and the norm induced by $\|\cdot\|_{L_2(\Omega)}$ combined with any fixed norm in $\R^m$.

  \section{Matrix representation of the collocation scheme \eqref{Col_method}}\label{sec_matrix}
  As we are interested only in the existence and uniqueness of a solution of the collocation scheme~\eqref{Col_method}, it suffices to consider only the first time interval $(0,t_1)$, i.e. $k=1$ in \eqref{Col_method}, and with a homogeneous initial condition. Furthermore,
  \new{setting $\tau:=\tau_1$,}
  we shall rescale the latter problem to the time interval $(0,1)$. Hence,  we are ultimately interested in the well-posedness of the problem
  \begin{equation} \label{Col_method1}
    (\partial_t^\alpha+\tau^\alpha \LL)\,U(x, \theta_\ell)=F(x, \theta_\ell)\qquad \mbox{for}\;\; x\in\Omega,\;\;  \ell\in\{1,\ldots, m\},
  \end{equation}
  where $U$ is a polynomial of degree $m$ in time for $t\in[0,1]$
  such that $U(\cdot, 0)=0$ and
  $U=0$ on $\pt\Omega$.

  \new{
  To relate \eqref{Col_method} on any time interval $(t_{k-1},t_k)$ to \eqref{Col_method1},
  set $\theta:=(t-t_{k-1})/\tau_k\in(0,1)$ (so $t=t_k^\ell=t_{k-1}+\theta_\ell\cdot\tau_k$
  corresponds to $\theta=\theta_\ell$)
   and $\hat U(\cdot,\theta):=U(\cdot,t)-U(\cdot,t_{k-1})$.
  Then a calculation shows that $ \LL U(\cdot,t)=\LL \hat U(\cdot,\theta)-\LL U(\cdot,t_{k-1})$
  and $\partial_t^\alpha U(\cdot,t)=\tau_k^{-\alpha}\,\partial_\theta^\alpha \hat U(\cdot,\theta)+F_0(\cdot, t)$,
  where $  F_0(\cdot, t):=\{\Gamma(1-\alpha)\}^{-1}\int_0^{t_{k-1}}(t-s)^{-\alpha}\pt_s U(\cdot,s)\,ds$.
  Now,
  \eqref{Col_method} on  $(t_{k-1},t_k)$ is equivalent to
  \[
    \partial_\theta^\alpha \hat U(\cdot,\theta_\ell)+\tau_k^{\alpha}\,\LL \hat U(\cdot,\theta_\ell)=F(\cdot,\theta_\ell),
    \;\;\mbox{where}\;
    F(\cdot,\theta):=\tau_k^{\alpha}[f(\cdot, t)+\LL U(t_{k-1})-F_0(\cdot, t)].
  \]
  Finally, with a slight abuse of notation, replacing $\hat U$ in the above by a $U(\cdot,t)$, and also
  using the notation  $\tau:=\tau_k$ for the current time step to further simplify the presentation,
  one gets \eqref{Col_method1}.}

  \new{Furthermore, without loss of generality, our analysis will be presented only for the case $\theta_m=1$.

  \begin{remark}[Case $\theta_m<1$]\label{rem_theta_1}
  All our findings for the case $0=\theta_0<\dots<\theta_m=1$ immediately apply to the general case
  $0=\theta_0<\dots<\theta_m\le 1$.
  Indeed, if $\theta_m<1$, it suffices to rescale \eqref{Col_method1} from the interval $(0,\theta_m)$
  to $(0,1)$, using $\hat U(\cdot, t):=U(\cdot, t/\theta_m)$ and $\hat F(\cdot, t):=\theta_m^\alpha F(\cdot, t/\theta_m)$.
Then $\pt_t^\alpha  U(\cdot, t)=\theta_m^{-\alpha}\pt_t^\alpha \hat U(\cdot, t)$, so  \eqref{Col_method1} becomes
  $$
  (\partial_t^\alpha+\hat\tau^\alpha \LL)\,\hat U(x, \hat\theta_\ell)= F(x, \hat\theta_\ell),\quad \mbox{where}\;\;
  \hat\tau:=\theta_m\tau_k\;\;\mbox{and}\;\;
  \{\hat\theta_\ell:=\theta_\ell/\theta_m\}_{\ell=1,\ldots, m}\,.
  $$
  The above is clearly a problem of type \eqref{Col_method1} with $0=\hat\theta_0<\dots<\hat\theta_m=1$.
  \end{remark}
  }

  A solution $U$ of \eqref{Col_method1} is a polynomial in time, equal to  $0$ at $t=0$, so it can be represented
  using the basis $\{t^j\}_{j=1}^m$ as
  \begin{equation}\label{matr1}
    U(x,t)=\sum_{j=1}^m v_j(x)\,t^j = \Bigl(t,\,t^2,\,\cdots,\,t^m\Bigr)\pmtrx{v_1(x)\\v_2(x)\\\vdots\;\;\\v_m(x)}=:T(t)\, \vec{V}(x).
  \end{equation}
  Here we used the standard linear algebra multiplication, and, for convenience, we  highlight column
  vectors in such evaluations with an arrow (as in $\vec{V}$).
  Other column vectors of interest are
  $\vec{\theta}:=\big(\theta_1,\theta_2,\cdots,\theta_m\big)^{\!\!\top}$,
  and also $\vec{U}(x):=U(x,\vec{\theta})$ and $\vec{F}(x):=F(x,\vec{\theta})$
  (where a function is understood to be applied to a vector argument elementwise).
  Thus, $\vec{U}$ can be represented using
  the Vandermonde-type matrix $W$ as follows:
  \begin{equation}\label{matr2}
    W:=\pmtrx{T(\theta_1)\\T(\theta_2)\\\vdots\;\;\;\\T(\theta_m)}=
       \pmtrx{\theta_1 & \theta_1^2 & \cdots &\theta_1^m\\
              \theta_2 & \theta_2^2 & \cdots &\theta_2^m\\
              \vdots\ &\vdots\ && \vdots\ \\
              \theta_m & \theta_m^2 &\cdots & \theta_m^m}
    \; \Rightarrow\;
    \vec{U}(x)=U(x,\vec{\theta})
    =W \vec{V}(x).
  \end{equation}
  Additionally, we shall employ two diagonal matrices
  \begin{equation}\label{matr3}
    D_1={\rm diag}(\theta_1^{-\alpha},\cdots,\theta_m^{-\alpha}), \;\; D_2:={\rm diag}(c_1,\cdots,c_m),\;\;
    c_j=\frac{\Gamma(j+1)}{\Gamma(j+1-\alpha)}.
  \end{equation}
  Note that the coefficients of $D_2$ appear in
  $\pt_t^\alpha t^j=c_j\, t^{j-\alpha}$. Note also that,
  in view of \new{$0<\theta_1<\ldots<\theta_m$},
  the Vandermonde-type matrix $W$ has a positive determinant
  $\det (W)=\bigl(\prod_{j=1}^m \theta_j\bigr)\prod_{1\le i<j\le m}(\theta_j-\theta_i)>0$, so it is invertible.

  With the above notations, \eqref{Col_method1} is equivalent to
  \begin{equation}\label{temp_matrix}
    \partial_t^\alpha U(x, \vec{\theta})+\tau^\alpha \LL U(x, \vec{\theta})=\vec{F}(x),
    \;\;\mbox{where}\;\;
    \LL U(x, \vec{\theta})=\LL\vec{U}(x)=\LL W \vec{V}(x),
  \end{equation}
  and where we still need to evaluate $\partial_t^\alpha U(x,\vec{\theta})$. For the latter, one has
  \[
    \pt_t^\alpha U(x,t)
    =\sum_{j=1}^m v_j(x)\,\pt_t^\alpha t^j
    =\sum_{j=1}^m c_j\, t^{j-\alpha}\,v_j(x)
      = t^{-\alpha}\,T(t)\,D_2\,\vec{V}(x),
  \]
  so
  \[
    \partial_t^\alpha U(x, \vec{\theta})=D_1WD_2\,\vec{V}(x).
  \]
  Substituting this in \eqref{temp_matrix}, we arrive at the following result.
  \begin{lemma}\label{lem_matr}
    With the notations \eqref{matr1},\,\eqref{matr2},\,\eqref{matr3}, the collocation scheme \eqref{Col_method1} is equivalent to
    \begin{equation}\label{matrix_col}
      \bigl(D_1 WD_2+\tau^\alpha\LL W\bigr)\,\vec{V}(x) = \vec{F}(x).
    \end{equation}
  \end{lemma}
\smallskip
  \begin{remark}[{Case $\LL=\LL(t)$}]\label{rem_matrix_L_t}
    The matrix representation \eqref{matrix_col} is valid only if the spatial operator $\LL$ is independent of $t$.
    If $\LL=\LL(t)$, then $\LL$ in \eqref{matrix_col} should be replaced by
    the diagonal matrix ${\rm diag}(\LL(\theta_1),\LL(\theta_2),\cdots,\LL(\theta_m))$.
  \end{remark}

\subsection{Well-posedness for the case $\LL=c(t)$ without spatial derivatives}\label{ssec_ODE}
  In the case of a time-fractional ordinary differential equation of type~\eqref{Col_method} with $\LL$ replaced by $c(t)$,
  one needs to investigate the well-posedness of a version of \eqref{Col_method1}
  with $\LL$ replaced by $\hat c(\theta_{\ell})$, where $\hat c(t):=c((t-t_{k-1})/\tau_k)$.
  Now, in view of Remark~\ref{rem_matrix_L_t}, the matrix representation \eqref{matrix_col} reduces to
  \begin{equation}\label{matrix_col_ODE}
    \bigl(D_1 WD_2+\tau^\alpha D_c W\bigr)\,\vec{V} = \vec{F},
    \qquad
    D_c:={\rm diag}(\hat c(\theta_1),\cdots,\hat c(\theta_m)),
  \end{equation}
  where $\vec{V}$ and $\vec{F}$ are $x$-independent, as well as the corresponding solution $U=U(t)$ of \eqref{Col_method1}.
  Recall also that here the Vandermonde-type matrix $W$ has a positive determinant and is invertible, as well as the diagonal matrices $D_1$ and $D_2$.
  Hence,  the matrix equation \eqref{matrix_col_ODE} will have a unique solution if and only if the matrix
  $I+\tau^\alpha(D_1 WD_2)^{-1}D_c W$ is invertible. A well-known sufficient condition for this is
  \begin{equation}\label{tau_small}
    \tau^\alpha\|(D_1 WD_2)^{-1}D_c W\|<1,
  \end{equation}
  where $\|\cdot\|$ denotes any matrix norm.

  Thus, for any bounded $c(t)$, any $m\ge1$, and any set of collocation points $\{\theta_\ell\}_{\ell=1}^m$,
  one can choose a sufficiently small $\tau$ to guarantee the well-posedness of \eqref{matrix_col_ODE}.

  \begin{remark}\label{rem_inapplble}
    Importantly, the above argument is not applicable to the collocation scheme \eqref{Col_method1} with an elliptic operator $\LL$. For example, the simplest
    one-dimensional Laplace operator $\LL=-\partial_x^2$ in the domain $\Omega=(0,1)$ has the set of eigenvalules
    $\{\lambda_n=(\pi n)^2\}_{n=1}^\infty$ and the corresponding basis of orthonormal eigenfunctions.
    Hence, an application of the eigenfunction expansion to the solution vector $\vec V(x)$
    immediately implies that a version of \eqref{Col_method1} with $\LL$ replaced by $\lambda_n$
    needs to be well-posed $\forall\, n\ge 1$.
    Clearly, in this case one cannot satisfy a sufficient condition of type~\eqref{tau_small}, which would take the form
    \begin{equation}\label{tau_small_h}
      \tau^\alpha\,\lambda_n\,\|(D_1 WD_2)^{-1} W\|<1 \qquad\forall n\ge1,
    \end{equation}
    in view of $\lambda_n\rightarrow \infty$ as $n\rightarrow \infty$.
    Hence, in this paper we explore alternative approaches to establishing the well-posedness of \eqref{Col_method1}.

    More generally, the eigenvalues of self-adjoint elliptic operators $\LL$ with variable coefficients
    in $\Omega\subset\R^d$ (where $d\in\{1,2,3\}$)
    are also real and exhibit a similar behaviour $\lambda_n\rightarrow \infty$ as $n\rightarrow \infty$
    \cite[section~6.5.1]{evans}, so the sufficient condition \eqref{tau_small_h} simply cannot be satisfied in this case.

    The situation somewhat improves once \eqref{Col_method1} is discretized in space, as then the operator $\LL$ is replaced by a finite-dimensional discrete operator $\LL_h$, the eigenvalues of which are bounded, so \eqref{tau_small_h} can now be imposed, albeit it may be very restrictive. For example,
    for the standard finite difference discretization $\LL_h$ of $\LL=-\partial_x^2$ in the domain $\Omega=(0,1)$,
    it is well-known that $\max_{n}\lambda_n\approx 4\cdot DOF^{2}$, where $DOF$ is the number of degrees of freedom in space. Furthermore, the maximal $\lambda_n$ may simply be unavailable for spatial discretizations of variable-coefficient elliptic operators in complex domains on possibly strongly non-uniform spatial meshes with local mesh refinements.%
  \end{remark}

\section{Well-posedness by means of the Lax-Milgram Theorem}\label{sec_LMil}
  Let $D_0={\rm diag}(d_1,\cdots, d_m)$ be an arbitrary diagonal matrix with strictly positive diagonal elements.
  Note that a left multiplication of \eqref{matrix_col} by $W^{\top}D_0$ (where both $W$ and $D_0$ are invertible) produces an equivalent system.
  Taking the inner product $\langle\cdot,\cdot\rangle$ of the latter with an arbitrary $\vec V^*\in (H^1_0(\Omega))^m$, in the sense of the $(L_2(\Omega))^m$ space, yields a variational formulation of \eqref{matrix_col}: Find $\vec V \in \new{(H^1_0(\Omega))^m}$ such that
  \begin{equation}\label{AA_from}
    \AA (\vec V, \vec V^*)=
    \langle W^{\top}D_0 D_1 WD_2\,\vec{V}, \vec V^* \rangle+\tau^\alpha\langle W^{\top}D_0 \LL W\vec V,\vec V^* \rangle = \langle W^{\top}D_0\vec{F},\vec{V}^*\rangle
  \end{equation}
  $\forall\, \vec V^*\in \new{(H^1_0(\Omega))^m}$.
  The well-posedness of this problem can be established using the Lax-Milgram Theorem as follows.

  \begin{theorem}\label{the_LaxM}
    Suppose that the bilinear form $a(v,w):=\langle \LL v, w\rangle$ on the space $H^1_0(\Omega)$ is bounded and coercive.
    Additionally, suppose that there exists a diagonal matrix $D$ with strictly positive diagonal elements such that
    the symmetric matrix $W^{\top}D WD_2+(W^{\top}D WD_2)^{\top}$ is positive-semidefinite.
    Then the bilinear \new{form} $\AA (\cdot, \cdot)$ on the space $(H^1_0(\Omega))^m$ is
    also bounded and coercive for $D_0:=D D_1^{-1}$. If, additionally, $\vec F\in (H^{-1}(\Omega))^m$, then
    there exists a unique solution to problem \eqref{AA_from}. Equivalently, if
    $\{F(\cdot, \theta_\ell)\}_{\ell=1}^m\in (H^{-1}(\Omega))^m$, then
    \eqref{Col_method1} has a unique solution $\{U(\cdot, \theta_\ell)\}_{\ell=1}^m$
    in $(H^1_0(\Omega))^m$.
  \end{theorem}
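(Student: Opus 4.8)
The plan is to verify the two Lax--Milgram hypotheses (boundedness and coercivity) for the bilinear form $\AA(\cdot,\cdot)$ defined in \eqref{AA_from}, with the specific choice $D_0:=D D_1^{-1}$, and then invoke the classical Lax--Milgram Theorem. First I would substitute $D_0=D D_1^{-1}$ into \eqref{AA_from}. Since $D$, $D_1$, and $D_2$ are all diagonal, they commute, so the first term becomes $\langle W^{\top}D D_1^{-1}D_1 W D_2\,\vec V,\vec V^*\rangle=\langle W^{\top}D W D_2\,\vec V,\vec V^*\rangle$; note that the $D_1$ factors cancel cleanly, which is precisely why this choice of $D_0$ is made. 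The second term becomes $\tau^\alpha\langle W^{\top}D D_1^{-1}\LL W\vec V,\vec V^*\rangle$. I would write both terms out componentwise over the $m$ vector components to expose their structure.

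For boundedness, I would estimate $|\AA(\vec V,\vec V^*)|$ by the constants $\|\cdot\|$ of the finite matrices $W^{\top}DWD_2$ and $W^{\top}DD_1^{-1}$ together with the assumed boundedness of $a(\cdot,\cdot)$ on $H^1_0(\Omega)$; since everything is a finite sum of bounded bilinear forms multiplied by bounded matrix entries, this is routine and yields $|\AA(\vec V,\vec V^*)|\le C\,\|\vec V\|_{(H^1_0)^m}\|\vec V^*\|_{(H^1_0)^m}$. The interesting half is coercivity. I would test with $\vec V^*=\vec V$. The second term gives, componentwise, a positive combination $\tau^\alpha\sum_{i,j}(W^{\top}DD_1^{-1})_{ij}\,a(V_j,V_i)$; to control this I would use that $a$ is coercive, so the diagonal contributions are bounded below, and I expect the matrix $W^{\top}D D_1^{-1}\LL W$ to inherit positivity from an argument analogous to the one for the first term. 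The first term $\langle W^{\top}DWD_2\,\vec V,\vec V\rangle$ is a quadratic form in $\vec V$ whose real part is governed by the symmetric part $\tfrac12\bigl(W^{\top}DWD_2+(W^{\top}DWD_2)^{\top}\bigr)$; the hypothesis that this symmetric matrix is positive-semidefinite is exactly what guarantees that this term contributes nonnegatively to $\AA(\vec V,\vec V)$.

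The main obstacle, as I see it, is handling the coupling in the elliptic term $\tau^\alpha\langle W^{\top}DD_1^{-1}\LL W\vec V,\vec V\rangle$, since $W^{\top}DD_1^{-1}$ need not itself be symmetric or positive-semidefinite, yet one needs its contribution together with the $\pt_t^\alpha$-term to be bounded below by a positive multiple of $\|\vec V\|_{(H^1_0)^m}^2$. The clean resolution I would look for is that $W^{\top}DD_1^{-1}W$ (after absorbing the $W$ on the right into the test/trial structure of $\LL$) has a positive-semidefinite symmetric part, so that coercivity of $a$ transfers: writing $\vec W:=W\vec V$, the elliptic term equals $\tau^\alpha\langle D D_1^{-1}\LL\vec W,W^{-\top}(\cdots)\rangle$ and I would rearrange to obtain $\tau^\alpha\sum_\ell d_\ell\theta_\ell^{\alpha}\,a(W_\ell,W_\ell)\ge c_{\mathrm{coer}}\tau^\alpha\sum_\ell d_\ell\theta_\ell^\alpha\|W_\ell\|_{H^1_0}^2$, which is a positive-definite quadratic form in $\vec W=W\vec V$ and hence, since $W$ is invertible, in $\vec V$. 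Combining the nonnegative first term with this strictly positive elliptic term yields coercivity. Finally, with both hypotheses verified and $\vec F\in(H^{-1}(\Omega))^m$ making the right-hand side a bounded linear functional, the Lax--Milgram Theorem delivers existence and uniqueness of $\vec V$, and the equivalence \eqref{matr2} between $\vec V$ and $\{U(\cdot,\theta_\ell)\}$ via the invertible $W$ completes the proof.
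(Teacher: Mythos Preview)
Your proposal is correct and follows essentially the same argument as the paper: the first term is nonnegative by the positive-semidefiniteness hypothesis on the symmetric part of $W^{\top}DWD_2$, and for the elliptic term one simply moves $W^{\top}$ across the $(L_2(\Omega))^m$ inner product via $\langle W^{\top}\vec x,\vec y\rangle=\langle \vec x,W\vec y\rangle$ to obtain the diagonal sum $\tau^\alpha\sum_\ell (D_0)_{\ell\ell}\,a(U_\ell,U_\ell)$ with $\vec U=W\vec V$, which is coercive because each $(D_0)_{\ell\ell}=d_\ell\theta_\ell^{\alpha}>0$ and $W$ is invertible (so the $(H^1_0)^m$ norms of $\vec V$ and $\vec U$ are equivalent). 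Your intermediate detour through $W^{-\top}$ and the symmetric part of $W^{\top}DD_1^{-1}W$ is unnecessary and slightly muddled, but you arrive at exactly the right decoupled form.
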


  \begin{proof}
    The boundedness and coercivity of $\AA (\cdot, \cdot)$ follow from the boundedness and coercivity of $a(\cdot,\cdot)$. In particular, for the coercivity, note that
    \begin{equation}\label{AA_from_new}
      \AA (\vec V, \vec V)=
      \langle W^{\top}D_0 D_1 WD_2\,\vec{V}, \vec V \rangle+\tau^\alpha
      \langle D_0 \LL W\vec V, W\vec V \rangle.
    \end{equation}
    For the final term here, recalling that $\vec U=W\vec V$, one gets
    \[
      \langle D_0 \LL W\vec V, W\vec V \rangle=\langle D_0 \LL \vec U, \vec U \rangle
      =\sum_{\ell=1}^m d_{\ell}\,a(U_\ell,U_\ell),
    \]
    where $\{d_\ell\}$ are strictly positive,
    so this term is coercive. Here we also used the equivalence of the norms of $\vec V$ and $\vec U=W\vec V$
    in $(H^1_0(\Omega))^m$ (as the matrix $W$ is invertible).

    Thus, for the coercivity of $\AA (\cdot, \cdot)$, it remains to note that
    the quadratic form generated by the matrix $W^{\top}D_0 D_1 WD_2=W^{\top}D WD_2$ is positive-semidefinite,
    as the same quadratic form is also generated by the symmetric
    positive-semidefinite matrix $\frac12[W^{\top}D WD_2+(W^{\top}D WD_2)^{\top}]$.

    Finally, the existence and uniqueness of a solution to \eqref{AA_from}, or, equivalently, to
    \eqref{Col_method1}, immediately follow by the Lax-Milgram Theorem (see, e.g., \cite[section~2.7]{BrenScott}).
  \end{proof}

  \begin{remark}[{Case $\LL=\LL(t)$}]\label{rem_matrix_L_t_coerc}
    Theorem~\ref{the_LaxM} remains valid for a time-dependent $\LL=\LL(t)$ as long as $a(\cdot, \cdot)$ is coercive $\forall\,t>0$.
    The proof applies to this more general case with minimal changes, in view of Remark~\ref{rem_matrix_L_t}.
  \end{remark}

  \begin{remark}[{Coercivity of $a(v,w):=\langle \LL v, w\rangle$}]\label{rem_a_coercive}
    One can easily check that for the coercivity of the bilinear form $a(v,w):=\langle \LL v, w\rangle$
    it suffices that the coefficients of $\LL$ in~\eqref{LL_def}  satisfy  $a_k>0$  and
    $c-\frac12\sum_{k=1}^d\pt_{x_k}\!b_k\ge 0$ in $\bar\Omega$.
  \end{remark}

  \begin{corollary}[Quadratic collocation scheme]\label{cor_m_2}
    If $m=2$, with the collocation points $\{\theta_1, 1\}$, for any
    $\theta_1\le\theta^*:=1-\frac12\alpha$,
    there exists a diagonal matrix $D$ required by Theorem~\ref{the_LaxM}.
    Hence, under the other conditions of this theorem, there exists a unique solution
    to \eqref{Col_method1} in $(H^1_0(\Omega))^2$.
  \end{corollary}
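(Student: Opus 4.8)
The plan is to instantiate Theorem~\ref{the_LaxM} for $m=2$ by constructing the required diagonal matrix $D={\rm diag}(d_1,d_2)$ explicitly. With $\theta_2=1$ we have
\[
  W=\pmtrx{\theta_1 & \theta_1^2\\ 1 & 1},\qquad
  D_2={\rm diag}(c_1,c_2),\quad
  c_1=\tfrac1{\Gamma(2-\alpha)},\;\; c_2=\tfrac{2}{(2-\alpha)\Gamma(2-\alpha)}.
\]
First I would compute $M:=W^{\top}DWD_2$ and form the symmetric matrix $S:=M+M^{\top}$. A direct calculation gives the positive diagonal entries $2c_1(d_1\theta_1^2+d_2)$ and $2c_2(d_1\theta_1^4+d_2)$, together with the off-diagonal entry $(c_1+c_2)(d_1\theta_1^3+d_2)$. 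Since the diagonal of $S$ is automatically positive, positive-semidefiniteness of $S$ reduces to the single scalar inequality $\det S\ge0$.

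Writing $a:=d_1\theta_1^2+d_2$, $g:=d_1\theta_1^3+d_2$ and $b:=d_1\theta_1^4+d_2$, the condition $\det S\ge0$ reads $4\,c_1c_2\,ab\ge(c_1+c_2)^2g^2$, i.e.\ $g^2/(ab)\le 4c_1c_2/(c_1+c_2)^2$. The key algebraic identity is $ab-g^2=d_1d_2\,\theta_1^2(1-\theta_1)^2\ge0$, which shows both that $g^2/(ab)<1$ and that the left-hand quantity depends only on the ratio $d_2/d_1$ (the overall scale of $D$ cancels in $S$). I would therefore minimise $g^2/(ab)$ over this ratio; normalising $d_1=1$ and writing $s:=d_2$, the stationarity condition yields $s=\theta_1^3$, whence $a=\theta_1^2(1+\theta_1)$, $b=\theta_1^3(1+\theta_1)$, $g=2\theta_1^3$, and the minimal value equals $4\theta_1/(1+\theta_1)^2$.

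It then remains to check that this minimum does not exceed the threshold, i.e.\ that $\theta_1/(1+\theta_1)^2\le c_1c_2/(c_1+c_2)^2$. The decisive observation is that $\phi(x):=x/(1+x)^2$ satisfies $\phi(x)=\phi(1/x)$; since $c_2/c_1=2/(2-\alpha)$, the right-hand side equals $\phi(c_2/c_1)=\phi\bigl((2-\alpha)/2\bigr)=\phi(\theta^*)$ with $\theta^*=1-\tfrac12\alpha$. As $\phi$ is strictly increasing on $(0,1)$ and $\theta_1,\theta^*\in(0,1)$, the inequality $\phi(\theta_1)\le\phi(\theta^*)$ holds precisely when $\theta_1\le\theta^*$, which is the hypothesis. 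Hence $D={\rm diag}(1,\theta_1^3)$ satisfies Theorem~\ref{the_LaxM}, and the existence and uniqueness claim follows. I expect the main obstacle to be the optimisation over $D$ combined with spotting the symmetry $\phi(x)=\phi(1/x)$, which is exactly what collapses the threshold to the clean value $\theta^*=1-\tfrac12\alpha$ rather than a messier expression.
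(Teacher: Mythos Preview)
Your proposal is correct and follows essentially the same route as the paper: both parametrize $D={\rm diag}(1,p)$, reduce to the single $2\times2$ determinant condition, identify the optimal choice $p=\theta_1^{3}$, and finish via a symmetry $f(x)=f(1/x)$ combined with monotonicity on $(0,1)$. The only cosmetic difference is that the paper works with $\psi(s)=(s-1)^2/s$ and the condition $\psi(\theta_1)\ge\psi(C)$, whereas you use $\phi(x)=x/(1+x)^2$ and $\phi(\theta_1)\le\phi(\theta^*)$; since $\phi=1/(\psi+4)$ and $\theta^*=1/C$, these are the same inequality.
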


  \begin{proof}
    Recall that for $m=2$, one has $0<\theta_1<\theta_2=1$.
    Let $D:={\rm diag}(1,p)$ with some $p>0$.
    Without loss of generality, we shall replace $D_2$, defined in \eqref{matr3}, by
    its normalized version $\hat D_2:=D_2/c_1={\rm diag}(1,C)$, where
    \begin{equation}\label{eq_C}
    C:=\frac{c_2}{c_1}=\frac{\Gamma(3)}{\Gamma(3-\alpha)}\frac{\Gamma(2-\alpha)}{\Gamma(2)}
    =\frac{2}{2-\alpha}\in(1,2).
    \end{equation}
    Next, using the definition of $W$ in~\eqref{matr2},
    and setting $\theta:=\theta_1$ to simplify the presentation,
    one gets
    \[
      W^{\top}D W\hat D_2
        = \pmtrx{\theta\,\,   &1\,\,\\
                 \theta^2 & 1^2}
          \cdot
          \pmtrx{1&0\\0&p}
          \cdot
          \pmtrx{\theta & \theta^2\\
                 1 & 1^2}
          \cdot
          \pmtrx{1&0\\
                 0&C}
        = \pmtrx{\theta^2+p & \ C(\theta^3+p)\\
                 \theta^3+p & C(\theta^4+p)}.
    \]
    For the symmetric matrix $W^{\top}D W\hat D_2+(W^{\top}D W\hat D_2)^\top$ to be positive-semidefinite,
    by the Sylvester’s criterion, we need its derterminant to be non-negative (while its diagonal elements
    are clearly positive). Thus we need to check that, for some $p>0$,
    \[
      \det \pmtrx{2(\theta^2+p) & \ (1+C)(\theta^3+p)\\
                  (1+C)(\theta^3+p) & 2C(\theta^4+p)}\ge 0.
    \]
    This is equivalent to
    \[
      (1+C)^2\, (\theta^3+p)^2\le 4C\,(\theta^2+p)\,(\theta^4+p).
    \]
    Dividing the above by $4C\, \theta^6$, and using the notation $P:=p/\theta^{3}$
    and $\psi(s):=s+s^{-1}-2=\frac{(s-1)^2}{s}$, yields
    \[
      \bigl[1+{\textstyle\frac14}\psi(C)\bigr]\, (1+P)^2\le (\theta^{-1}+P)\,(\theta+P),
    \]
    where we also used the observation that $\frac{(1+C)^2}{4C}=1+\frac{(C-1)^2}{4C}=1+\frac14\psi(C)$.
    Next, a calculation yields $(\theta^{-1}+P)\,(\theta+P)=(1+P)^2 +P\psi(\theta)$, so one gets
    \[
      (1+P)^2+{\textstyle\frac14}\psi(C)\,(1+P)^2 \le (1+P)^2 +P\psi(\theta),
    \]
    or
    \[
      (1+P)^2\le 4P\,\frac{\psi(\theta)}{\psi(C)}.
    \]
    Setting $\kappa:={\psi(\theta)}/{\psi(C)}$, the above is equivalent to
    $(P-1)^2\le 4(\kappa-1)P$. For any positive $P$ to satisfy this condition, it is necessary and sufficient
    that $\kappa\ge 1$ (in which case, one may choose $P=1$, which corresponds to
    $p=\theta^3$)
    Thus for the existence of a diagonal matrix $D$ required by Theorem~\ref{the_LaxM}, we need to impose
    ${\psi(\theta)}\ge{\psi(C)}$.
    Finally, note that $C> 1$, while $\psi(\theta)=\psi(\theta^{-1})$ and $\theta^{-1}>1$. As $\psi(s)$
    is increasing for $s>1$, we impose
    $\theta^{-1}\ge C$, or $\theta_1=\theta\le\theta^*=\frac{2-\alpha}2=1-\frac12\alpha$.
  \end{proof}

    \begin{remark}[Case $\theta_2<1$]
    \new{In view of Remark~\ref{rem_theta_1}, Corollary~\ref{cor_m_2}
     applies to the case
    $m=2$, with the collocation points $0=\theta_0<\theta_1<\theta_2\le 1$
    under the condition $\theta_1\leq\theta_2\left( 1-\frac{1}{2}\alpha \right)$.}
    \end{remark}

  \begin{corollary}[Galerkin finite element discretization]
    The existence and uniqueness results of Theorem~\ref{the_LaxM} and Corollary~\ref{cor_m_2}
    remain valid for a version of
    \eqref{AA_from} with $(H^1_0(\Omega))^m$ replaced by $(S_h)^m$, where $S_h$ is a finite-dimensional subspace of
    $H^1_0(\Omega)$.
    Equivalently, they apply to the corresponding spatial discretization of \eqref{Col_method1}.
  \end{corollary}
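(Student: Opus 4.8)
The plan is to observe that the entire argument behind Theorem~\ref{the_LaxM} rests only on the Hilbert-space structure of the solution space together with purely algebraic properties of the matrices $W$, $D_0$, $D_1$, $D_2$, none of which is affected by passing from $H^1_0(\Omega)$ to a finite-dimensional subspace $S_h$. Since $S_h$ is a (closed) linear subspace of $H^1_0(\Omega)$, it is itself a Hilbert space under the induced inner product, and hence so is $(S_h)^m$. The Lax-Milgram Theorem applies verbatim on this smaller Hilbert space, so it suffices to verify that $\AA(\cdot,\cdot)$ remains bounded and coercive, and that the right-hand side remains a bounded linear functional, when both arguments are restricted to $(S_h)^m$.

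First I would note that boundedness is inherited automatically: a bilinear form bounded on $(H^1_0(\Omega))^m$ is a fortiori bounded on any subspace, and the same holds for the linear functional $\vec V^*\mapsto\langle W^{\top}D_0\vec F,\vec V^*\rangle$ (indeed, on a finite-dimensional space every linear functional is bounded). Coercivity is equally immediate: the proof of Theorem~\ref{the_LaxM} establishes a lower bound $\AA(\vec V,\vec V)\ge\gamma\|\vec V\|^2$ valid for every $\vec V\in(H^1_0(\Omega))^m$, and since the norm on $(S_h)^m$ is the restriction of the norm on $(H^1_0(\Omega))^m$, the same inequality holds for all $\vec V\in(S_h)^m\subset(H^1_0(\Omega))^m$.

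The one step deserving a moment's care is the norm-equivalence argument used in the coercivity proof, where $\vec U=W\vec V$ replaces $\vec V$. Here I would check that this substitution stays inside the subspace: because $S_h$ is a linear space and $W$ acts only on the vector components through the invertible linear map $\vec V\mapsto W\vec V$, each component $U_\ell=\sum_j W_{\ell j}V_j$ again lies in $S_h$, and $\vec V\mapsto W\vec V$ is a bijection of $(S_h)^m$ onto itself. Consequently the equivalence of the norms of $\vec V$ and $\vec U=W\vec V$ on $(S_h)^m$ follows from the invertibility of $W$ exactly as before, and the terms $\sum_\ell d_\ell\,a(U_\ell,U_\ell)$ are again coercive. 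Since the hypothesis of Corollary~\ref{cor_m_2} is a condition on the matrix $W^{\top}DW\hat D_2$ alone, independent of the function space, the matrix $D$ constructed there remains admissible, so the $m=2$ conclusion transfers without change.

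Finally I would record that the Galerkin discretization of \eqref{Col_method1} is, by definition, the problem of finding $\vec V\in(S_h)^m$ satisfying \eqref{AA_from} for all $\vec V^*\in(S_h)^m$; hence the two formulations in the statement coincide, and the existence and uniqueness just established is precisely the claim. I do not expect any genuine obstacle here: the result is essentially the standard observation that Lax-Milgram, being a Hilbert-space-level statement, is stable under Galerkin restriction, the only thing to confirm being that the algebraic reformulation via $W$ respects the subspace.
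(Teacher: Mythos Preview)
Your argument is correct and complete. The paper actually states this corollary without any proof, treating it as an immediate consequence of Theorem~\ref{the_LaxM}; your proposal simply spells out the standard observation the paper leaves implicit, namely that boundedness and coercivity of $\AA(\cdot,\cdot)$ are inherited by restriction to the closed subspace $(S_h)^m\subset(H^1_0(\Omega))^m$, so Lax--Milgram applies verbatim there.
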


  For higher-order collocation schemes with $m\ge 3$, the construction of a suitable matrix $D$
  (as described in the proof of Corollary~\ref{cor_m_2})
  requires $m-1$ positive parameters, so such constructions are unclear for general $\alpha\in(0,1)$ and $\{\theta_\ell\}_{\ell=1}^m$.
  In this case, $D$ may still be constructed for particular values of $\alpha$ and $\{\theta_\ell\}$
  of interest (then semi-computational techniques, such as employed in section~\ref{ssec_comp}, may be useful).
  Furthermore, in the next section~\ref{sec_lambda}, we shall describe alternative approaches to establishing the well-posedness of~\eqref{Col_method1}, which can be easily applied for any $m$ of interest.

\section{Well-posedness by means of an eigenvalue test}\label{sec_lambda}

  Throughout this section, we shall assume that the time-independent operator $\LL$
  has a set of real positive \new{eigenvalues}
  $0<\lambda_1\le\lambda_2\le \lambda_3\le\ldots$ and a corresponding basis of orthonormal eigenfunctions
  $\{\psi_n(x)\}_{n=1}^\infty$.
  This assumption is immediately satisfied if $\LL$ in \eqref{LL_def} is
  self-adjoint (i.e. $b_k=0$ for $k=1\new{,}\ldots,d$) and $c\ge 0$; see, e.g., \cite[section~6.5.1]{evans}  for further details.
  (Even if $\LL$ is non-self-adjoint, the problem can sometimes,  as, e.g., for
  $\LL = -\sum_{k=1}^d \bigl\{\pt^2_{x_k} + (\pt_{x_k}\! B(x))\, \pt_{x_k} \bigr\}+c(x)$
  \cite[problem~8.6.2]{evans},
   be reduced to the self-adjoint case.)
 %

  The above assumption essentially allows us to employ the method of separation of variables.
  Thus, the well-posedness of problem \eqref{matrix_col} can be investigated using
  an eigenfunction expansion of its solution $\vec{V}(x)$.

  Recall \eqref{matrix_col} and rewrite it using $\vec U=W\vec V$ as
  \begin{equation}\label{matrix_col_U}
    \bigl(D_1 WD_2W^{-1}+\tau^\alpha\LL \bigr)\,\vec{U}(x) = \vec{F}(x).
  \end{equation}
  Next, using the eigenfunction expansions $\vec{U}(x)=\sum_{\new{n}=1}^\infty \vec{U}_n\, \psi_n(x)$
  and $\vec{F}(x)=\sum_{n=1}^\infty \vec{F}_n\, \psi_n(x)$, one reduces the above to
  the following matrix equations for vectors $\vec{U}_n$ $\forall\,n\ge1$:
  \begin{equation}\label{lambda_n_prob}
    \bigl(D_1 WD_2W^{-1}+\tau^\alpha\lambda_n\new{I} \bigr)\,\vec{U}_n = \vec{F}_n\,.
  \end{equation}
  For each $n$, we get an $m\times m$ matrix equation,  the well-posedness of which is addressed by
  the following elementary lemma.

  \begin{lemma}\label{lem_M}
    Set $M:=D_1 WD_2W^{-1}$ and $R(\lambda\,;M):=(M+\lambda\new{I})^{-1}$. Then
    \begin{equation}\label{resolvent}
      0<\| R(\lambda\,;M)\|\le C_M\;\;\;\forall\lambda\ge 0\quad  \mbox{and}\quad
      \lim_{\lambda\rightarrow\infty}\bigl\{\lambda\,\| R(\lambda\,;M)\|\bigr\}=1,
    \end{equation}
    with some positive constant $C_M=C_M(\alpha, m, \{\theta_\ell\})$
    (where  $\|\cdot\|$ denotes the  matrix norm induced by 
    any vector norm in $\R^m$),
    if and only if $M$ has no real negative eigenvalues.
  \end{lemma}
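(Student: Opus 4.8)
The plan is to prove the two directions of the stated equivalence separately, relying throughout on the fact that $M=D_1WD_2W^{-1}$ is invertible, being a product of the invertible matrices $D_1$, $W$, $D_2$, $W^{-1}$ (recall that $D_1$ and $D_2$ have strictly positive diagonal entries, while $\det W>0$). Consequently $0$ is never an eigenvalue of $M$, so the hypothesis ``$M$ has no real negative eigenvalues'' is in fact equivalent to ``$M$ has no real eigenvalue in $(-\infty,0]$'', which is precisely the condition that $-\lambda$ fails to be an eigenvalue of $M$ for every $\lambda\ge0$, i.e. that $M+\lambda I$ is invertible for all $\lambda\ge0$. I would make this invertibility of $M$ explicit at the outset, as it underpins both directions.

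For the necessity direction I would argue by contraposition. If $M$ has a real negative eigenvalue $-\mu$ with $\mu>0$, then $M+\mu I$ is singular, so $R(\mu;M)=(M+\mu I)^{-1}$ is undefined at the admissible value $\lambda=\mu\ge0$; more quantitatively, as $\lambda\to\mu$ along values where $M+\lambda I$ is invertible, the smallest singular value of $M+\lambda I$ tends to $0$ and hence $\|R(\lambda;M)\|\to\infty$. Either way the uniform bound $\|R(\lambda;M)\|\le C_M$ on $[0,\infty)$ cannot hold, which is the desired contradiction.

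For the sufficiency direction, assume $M$ has no real negative (hence, by invertibility, no real non-positive) eigenvalue. By the observation above, $M+\lambda I$ is invertible for every $\lambda\ge0$, so $R(\lambda;M)$ is well defined, and $\|R(\lambda;M)\|>0$ because $R(\lambda;M)$ is itself invertible. The limit in \eqref{resolvent} I would obtain by factoring $\lambda R(\lambda;M)=(I+\lambda^{-1}M)^{-1}$ for $\lambda>0$; since matrix inversion is continuous at $I$, this converges to $I$ as $\lambda\to\infty$, and taking norms gives $\lambda\|R(\lambda;M)\|\to\|I\|=1$, where $\|I\|=1$ for any norm induced by a vector norm on $\R^m$. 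Finally, the uniform bound follows because $\lambda\mapsto\|R(\lambda;M)\|$ is continuous on $[0,\infty)$ (inversion being continuous on the set of invertible matrices) and tends to $0$ at infinity by the rate just established; a continuous function on $[0,\infty)$ with a finite limit at infinity is bounded, so one may take $C_M:=\sup_{\lambda\ge0}\|R(\lambda;M)\|<\infty$.

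The computations are all routine, so I do not expect a serious obstacle; the only point requiring genuine care is the exact matching of the eigenvalue condition with the range $\lambda\ge0$. The boundary case $\lambda=0$ (equivalently the eigenvalue $0$) is what forces the use of the invertibility of $M$ to upgrade ``no negative eigenvalue'' to ``no non-positive eigenvalue''; without this step the stated equivalence would fail precisely at $\lambda=0$, so I would flag it explicitly.
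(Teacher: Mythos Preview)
Your proposal is correct and follows essentially the same route as the paper: necessity by noting that a negative eigenvalue $-\mu$ makes $M+\mu I$ singular, sufficiency by using the invertibility of $M=D_1WD_2W^{-1}$ to cover $\lambda=0$, the factorization $\lambda R(\lambda;M)=(I+\lambda^{-1}M)^{-1}\to I$ for the limit, and continuity plus decay at infinity for the uniform bound. Your write-up is in fact slightly more careful than the paper's in making the $\lambda=0$ boundary case explicit upfront and in noting that $\|I\|=1$ for induced norms.
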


  \begin{proof}
    The final assertion in \eqref{resolvent} is obvious, in view of $R(\lambda\,;M)=\lambda^{-1}(\lambda^{-1}M+I)^{-1}\rightarrow \lambda^{-1}I$ as $\lambda\rightarrow \infty$.
    Next, if $M$ has a real negative eigenvalue $-\lambda^*$, then
    $M+\lambda^*\new{I}$ is a singular matrix, so
    $R(\lambda^*\,;M)$ is not defined, so \eqref{resolvent} is not true.
    If $M$ has no real negative eigenvalues, then $R(\lambda\,;M)$ is defined $\forall\,\lambda>0$.
    It is also defined for $\lambda=0$, as $M=D_1 WD_2W^{-1}$ is invertible (as discussed in section~\ref{sec_matrix}).
    Furthermore, $R(\lambda\,;M)\,\vec v=0$ if and only if $\vec v=0$, so $\| R(\lambda\,;M)\|>0$ $\forall\, \lambda\ge 0$. As $\| R(\lambda\,;M)\|$ is a continuous positive function of $\lambda$ on $[0,\infty)$, which decays as $\lambda\rightarrow \infty$, it has to be bounded by some positive constant $C_M$.
  \end{proof}

  As problem \eqref{matrix_col_U} (or, equivalently, \eqref{Col_method1}) is reduced to the set of problems~\eqref{lambda_n_prob},
  the above lemma immediately yields sufficient conditions for the well-posedness
  as long as $\vec F(x)$ allows an eigenfunction expansion, in which case there exists
  $\vec U(x)$ (the regularity of which follows from \eqref{resolvent}).
  Thus, we get the following result.

  \begin{theorem}\label{the_M_matrix}
    Suppose that the operator $\LL$ has a set of real positive \new{eigenvalues}
    and a corresponding basis of orthonormal eigenfunctions,
    and $\vec F(x)$ is sufficiently smooth.
    If, additionally, the matrix $M=D_1 WD_2W^{-1}$ has no real negative eigenvalues, then
    there exists a unique solution to problem \eqref{matrix_col_U}, or, equivalently,
    to problem \eqref{Col_method1}.
  \end{theorem}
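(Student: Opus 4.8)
The plan is to carry out exactly the eigenfunction-expansion reduction already set up in \eqref{matrix_col_U}--\eqref{lambda_n_prob} and to supply the two missing ingredients: invertibility of each finite-dimensional block and convergence of the reconstructed series. First I would invoke the hypothesis that $\{\psi_n\}_{n=1}^\infty$ is an orthonormal basis of eigenfunctions to expand the data as $\vec{F}(x)=\sum_{n=1}^\infty \vec{F}_n\,\psi_n(x)$ and to seek $\vec{U}(x)=\sum_{n=1}^\infty \vec{U}_n\,\psi_n(x)$. Substituting into \eqref{matrix_col_U} and using $\LL\psi_n=\lambda_n\psi_n$ decouples the problem into the independent $m\times m$ systems \eqref{lambda_n_prob}, one per index $n$.

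Next, for each $n$, since $\lambda_n>0$ and $\tau^\alpha>0$, the shift $\tau^\alpha\lambda_n$ is strictly positive, so $-\tau^\alpha\lambda_n$ is a real negative number. By hypothesis $M$ has no real negative eigenvalues, hence $-\tau^\alpha\lambda_n$ is not an eigenvalue of $M$, the matrix $M+\tau^\alpha\lambda_n$ is invertible, and $R(\tau^\alpha\lambda_n;M)$ is well defined. This determines each block uniquely via $\vec{U}_n=R(\tau^\alpha\lambda_n;M)\,\vec{F}_n$, and, crucially, Lemma~\ref{lem_M} supplies the \emph{uniform} bound $\|R(\tau^\alpha\lambda_n;M)\|\le C_M$ over all $n$ (as every shift satisfies $\tau^\alpha\lambda_n\ge0$), whence $|\vec{U}_n|\le C_M\,|\vec{F}_n|$ in the vector norm inducing $\|\cdot\|$.

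With the uniform bound in hand, existence follows by assembling the series: using orthonormality of $\{\psi_n\}$ and the equivalence of norms on $\R^m$, one gets $\|\vec{U}\|_{(L_2(\Omega))^m}^2\lesssim\sum_n|\vec{U}_n|^2\le C_M^2\sum_n|\vec{F}_n|^2\lesssim\|\vec{F}\|_{(L_2(\Omega))^m}^2$, so the series converges in $(L_2(\Omega))^m$ whenever $\vec{F}$ does. The refined decay $\|R(\lambda;M)\|\sim\lambda^{-1}$ from the second relation in \eqref{resolvent} further yields $|\vec{U}_n|\lesssim\lambda_n^{-1}|\vec{F}_n|$ for large $n$, which—together with $\lambda_n\to\infty$—upgrades the convergence and realizes the elliptic smoothing one expects; this is the role played by the hypothesis that $\vec{F}$ is sufficiently smooth, as it guarantees that term-by-term application of $\LL$ (and of $\partial_t^\alpha$, acting on the polynomial time factors) is legitimate. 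Uniqueness is then immediate: if $\vec{F}=0$ then every $\vec{F}_n=0$, so every invertible block forces $\vec{U}_n=0$, and hence $\vec{U}=0$.

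There is, in truth, no serious obstacle remaining at this stage, because the essential difficulty has already been isolated and resolved in Lemma~\ref{lem_M}: the condition ``$M$ has no real negative eigenvalues'' is precisely what converts the formal block inversions into a \emph{uniformly} bounded family of resolvents. Once that uniform bound $C_M$ is available, the passage from the decoupled problems \eqref{lambda_n_prob} back to a genuine function-space solution of \eqref{matrix_col_U} is a routine summability argument. The only point demanding a little care is pinning down the precise sense (and the exact meaning of ``sufficiently smooth'') in which the reconstructed series solves \eqref{Col_method1}, and checking that the expansion coefficients decay fast enough against the weights $\lambda_n$ to justify differentiating the series termwise.
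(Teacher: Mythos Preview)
Your proposal is correct and follows precisely the approach the paper takes: the paper does not give a separate proof environment for this theorem but simply observes (in the paragraph immediately preceding the theorem) that the eigenfunction expansion reduces \eqref{matrix_col_U} to the decoupled systems \eqref{lambda_n_prob}, after which Lemma~\ref{lem_M} ``immediately yields'' well-posedness and the regularity of $\vec U$ ``follows from \eqref{resolvent}.'' You have in fact supplied more detail than the paper does---in particular your explicit $(L_2(\Omega))^m$ summability estimate and your remark on termwise differentiation---but the structure (expand, decouple, invert each block via the resolvent bound, reassemble) is identical.
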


  \begin{remark}[Full discretizations]
    The existence and uniqueness result of Theorem~\ref{the_M_matrix}, as well as
    its important corollary presented as Theorem~\ref{the_cor} below,
    remains valid for any spatial discretization of \eqref{matrix_col_U}, or, equivalently,
    of problem \eqref{Col_method1},
    as long as the corresponding discrete operator
    $\LL_h$ (which replaces $\LL$ in  \eqref{Col_method1} and, hence, \eqref{matrix_col_U}) has a set of real positive \new{eigenvalues}
    and a corresponding basis of orthonormal eigenvectors.

    This includes standard Galerkin finite element discretizations in space. Indeed, as shown, e.g., in \cite[chapter~6]{strang_fix} (by representing the eigenvalues via the Rayleigh quotient),
    not only all eigenvalues $\{\lambda_n^h\}$ of the corresponding discrete operator $\LL_h$ are real positive, but they also satisfy $\lambda_n^h\ge\lambda_n\ge\lambda_1>0$. The existence of
    an eigenvector basis follows from $\LL_h$ being self-adjoint, i.e. being associated with a symmetric real-valued matrix.

    Similarly, finite difference discretizations
    of spatial self-adjoint operators (such as discussed in \cite[section~4]{KopMC19})
    typically satisfy the discrete maximum principle (which immediately excludes real negative and zero eigenvalues), and are associated with symmetric real-valued matrices (which mimics $\LL$ being self-adjoint). Hence, all discrete eigenvalues are real positive, and there is a corresponding basis of eigenvectors. So Theorem~\ref{the_M_matrix} is applicable to such full discretizations as well.
  \end{remark}

  To apply the above theorem, one needs to check whether the  matrix
  $M=D_1 WD_2W^{-1}$ has no real negative eigenvalues.
  Clearly, $M$ is a fixed matrix of a relatively small size $m\times m$;
  however, it depends on $m$, $\alpha$, and the set of collocation points $\{\theta_\ell\}$.
  Below we shall investigate the spectrum of $M$,  first, analytically, and then using a semi-computational approach.
  Note also that
  in practical computations, the spectrum of $M$ may be easily checked computationally
  for any fixed  $m$, $\alpha$, and $\{\theta_\ell\}$ of interest.

\subsection{An analytical approach}\label{ssec_analy}

  To ensure that $M=D_1 WD_2W^{-1}$ has no real negative eigenvalues, one does not necessarily need to
  evaluate the eigenvalues of this matrix. Instead, the coefficients of the characteristic
  polynomial may be used. In particular, for a characteristic polynomial of the form
  $\sum_{j=0}^m (-\lambda)^j a_j$ to have no real negative roots, it suffices to check that $a_j>0$ $\forall\,j=0\new{,}\ldots,m$.
  (We also note  the stronger, and more intricate, Routh-Hurwitz stability criterion \cite[Vol. 2, Sec. 6]{Gantmacher59},
  which gives necessary and sufficient conditions for all roots of a given polynomial to have positive real parts.)
%

The above observation leads us to the main result of the paper on the existence and uniqueness of collocation solutions
for any $\alpha\in(0,1)$, any $m\ge 1$, and any set of collocation points $\{\theta_\ell\}_{\ell=1}^m$.

  \begin{theorem}\label{the_cor}
    For any $m\ge 1$, no eigenvalue of the matrix $M=D_1 WD_2W^{-1}$ is real negative. Hence,
    Theorem~\ref{the_M_matrix} applies
    for any $\alpha\in(0,1) $ and any set of collocation points $0<\theta_1 <\theta_2< \dots < \theta_m\new{\leq1}$.
  \end{theorem}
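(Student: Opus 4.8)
The plan is to prove the contrapositive in the form: $\det(M+\mu I)\neq 0$ for every $\mu>0$. Since $M$ is a real matrix, a real negative eigenvalue is exactly a value $-\mu<0$ for which $M+\mu I$ is singular, so this suffices. I would not try to locate the eigenvalues of $M$ at all. Instead, following the observation recalled just before the statement, I would show that, regarded as a polynomial in $\mu$, the determinant $\det(M+\mu I)$ has only \emph{positive} coefficients, which yields $\det(M+\mu I)>0$ for all $\mu\ge 0$ simultaneously.

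The first step is an explicit identity that eliminates the factor $W^{-1}$ from $M=D_1WD_2W^{-1}$. Multiplying $M+\mu I$ on the right by the invertible matrix $W$ and using $WD_2W^{-1}W=WD_2$ gives $(M+\mu I)W=D_1WD_2+\mu W$. A one-line entrywise check, using $(WD_2)_{\ell j}=c_j\theta_\ell^{\,j}$ and $(D_1)_{\ell\ell}=\theta_\ell^{-\alpha}$, shows this equals $D_1\Phi$, where $\Phi:=\bigl[\phi_j(\theta_\ell)\bigr]_{\ell,j=1}^m$ with $\phi_j(t):=c_j\,t^{\,j}+\mu\,t^{\,j+\alpha}$. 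As $W$ and $D_1$ are invertible with $\det W>0$ and $\det D_1=\prod_\ell\theta_\ell^{-\alpha}>0$, we obtain $\det(M+\mu I)=(\det D_1/\det W)\,\det\Phi$, so the whole question reduces to showing $\det\Phi>0$.

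The second step treats $\det\Phi$ by total positivity. Each column $j$ of $\Phi$ is the sum of the two vectors $c_j(\theta_\ell^{\,j})_\ell$ and $\mu(\theta_\ell^{\,j+\alpha})_\ell$, so expanding the determinant by multilinearity in its columns gives
\[
\det\Phi=\sum_{\varepsilon\in\{0,1\}^m}\mu^{|\varepsilon|}\Bigl(\prod_{j:\,\varepsilon_j=0}c_j\Bigr)\det\bigl[\theta_\ell^{\,\rho_j(\varepsilon)}\bigr]_{\ell,j},
\]
where $|\varepsilon|$ is the number of indices with $\varepsilon_j=1$, and $\rho_j(\varepsilon):=j$ if $\varepsilon_j=0$ while $\rho_j(\varepsilon):=j+\alpha$ if $\varepsilon_j=1$. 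The crucial elementary observation is that, because $\alpha\in(0,1)$, one always has $\rho_j(\varepsilon)\le j+\alpha<j+1\le\rho_{j+1}(\varepsilon)$, so for every $\varepsilon$ the exponents satisfy $\rho_1(\varepsilon)<\cdots<\rho_m(\varepsilon)$. Hence each determinant above is a generalized Vandermonde determinant with strictly increasing real exponents evaluated at the strictly increasing positive nodes $0<\theta_1<\cdots<\theta_m$, and all such determinants are strictly positive. Consequently every term in the sum is positive; in particular each coefficient of $\mu^k$ is positive, which is exactly the positivity of the characteristic-polynomial coefficients $a_j$ recalled before the theorem. Therefore $\det\Phi>0$ for all $\mu\ge 0$, which finishes the argument.

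The main obstacle is the last positivity input: the strict positivity of $\det[\theta_\ell^{\,p_j}]$ for increasing real exponents $p_1<\cdots<p_m$ and increasing positive nodes. This is the classical statement that the functions $t\mapsto t^{\,p}$ form an order-complete Chebyshev system on $(0,\infty)$, equivalently that the associated kernel is totally positive; I would invoke it from the total-positivity literature, or prove it by induction on $m$ using Rolle's theorem after dividing out the smallest power. Everything else is bookkeeping: the identity $(M+\mu I)W=D_1\Phi$, the multilinear column expansion, and the ordering $\rho_j(\varepsilon)<\rho_{j+1}(\varepsilon)$, which is precisely where the hypothesis $\alpha<1$ enters and makes the argument work uniformly in $m$, in $\alpha\in(0,1)$, and in the choice of collocation points.
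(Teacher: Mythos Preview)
Your proof is correct and follows essentially the same approach as the paper: both arguments expand the characteristic determinant by column multilinearity into $2^m$ generalized Vandermonde determinants with strictly increasing real exponents at the increasing nodes $\theta_1<\cdots<\theta_m$, and then invoke the positivity of such determinants. The only cosmetic difference is that you first factor out $D_1$ (obtaining exponents $\rho_j\in\{j,\,j+\alpha\}$), whereas the paper works directly with $M_\alpha-\lambda W$ (obtaining exponents $\beta_k\in\{k-\alpha,\,k\}$); these differ only by a row scaling and yield the same conclusion.
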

 \begin{proof}
Recalling the definitions of $W$, $D_1$, and $D_2$ from~\eqref{matr2} and~\eqref{matr3},
    let
    \begin{align*}
      M_\alpha:=D_1W  D_2
      &=\pmtrx{[c]
               c_1 \theta_1^{1-\alpha} & c_2 \theta_1^{2-\alpha} & \cdots & c_m \theta_1^{m-\alpha}\\[1pt]
               c_1 \theta_2^{1-\alpha} &   c_2 \theta_2^{1-\alpha}& \cdots & c_m \theta_2^{m-\alpha}\\
                                 %
               \vdots & \vdots&                                                 & \vdots\\
               c_1 \theta_m^{1-\alpha} &
               c_2 \theta_m^{1-\alpha}&
               \cdots
               & c_m \theta_m^{m-\alpha}},&
       W
        =
       \pmtrx{\theta_1 & \theta_1^2 & \cdots &\theta_1^m\\
              \theta_2 & \theta_2^2 & \cdots &\theta_2^m\\
              \vdots\ &\vdots\ && \vdots\ \\
              \theta_m & \theta_m^2 &\cdots & \theta_m^m}.
    \end{align*}
    The eigenvalues of the matrix $M=D_1 WD_2W^{-1}$ are the roots of
  the  characteristic polynomial of the form
    \begin{equation}\label{eq:poly}
      \det(M_\alpha-\lambda W)=:\sum_{j=0}^m (-\lambda)^j a_j,
    \end{equation}
    while to show that no eigenvalue of the matrix $M=D_1 WD_2W^{-1}$ is real negative,
    it suffices to check that $a_j>0$ $\forall\,j=0\new{,}\ldots,m$.

  Note that the coefficients $\{a_j\}$ in  \eqref{eq:poly} can be represented via determinants of certain $m\times m$ matrices, denoted
  $M_{\mathcal I}$, where $\mathcal I$ is a subset of  $\{1,\ldots,m\}$, constructed column by column, using the corresponding $k$th column $M_\alpha^k$ of $M_\alpha$ if $k\not\in \mathcal I$, or the $k$th column $W^k$ of $W$ if $k\in \mathcal I$:
   \[
      M_{\mathcal I}^{k} := \begin{cases}
                    M_\alpha^k & \mbox{if}\;\;k\not\in {\mathcal I}\\
                    W^k &\mbox{if}\;\; k\in \mathcal I
                  \end{cases}
                  \qquad\mbox{for any}\;\;
                  {\mathcal I}\subseteq \{1,\ldots, m\},\quad k=1,\ldots, m.
    \]
  For example, for $a_0$ and $a_m$ we immediately have:
      \[
      a_m = \det W = \det M_{\{1,\ldots,m\}}
      \quad\text{and}\quad
      a_0 = \det M_\alpha = \det M_{\emptyset} .
    \]

For the other coefficients, note that if the $k$th column $A^k$ of some matrix $A$ allows a representation
$A^k=B^k-\lambda C^k$ for some column vectors $B^k$ and $C^k$, then
 \begin{align*}
  &\det\Bigl(A^1 \cdots A^{k-1}A^kA^{k+1}\cdots A^m\Bigr)\\&\quad\qquad{}=
   \det\Bigl(A^1 \cdots A^{k-1}B^kA^{k+1}\cdots A^m\Bigr)-\lambda\,
    \det\Bigl(A^1 \cdots A^{k-1}C^kA^{k+1}\cdots A^m\Bigr),
\end{align*}
  which is easily checked using the $k$th column expansion.
  Using this property for all columns in $M_\alpha-\lambda W$, one gets
  \begin{equation}\label{eq:poly2}
  \det(M_\alpha-\lambda W)=
  \sum_{j=0}^m (-\lambda)^{j}\,\Bigl\{\sum_{\#{\mathcal I}=j}\! \det M_{\mathcal I}\Bigr\},
\end{equation}
  where $\#{\mathcal I}$ denotes the number of elements in any  index subset
  ${\mathcal I}\subseteq\{1,\ldots, m\}$.
  (For $m=2, 3$, this is elaborated in Remark~\ref{rem_poly} below.)

  Hence, to show that all coefficients $\{a_j\}$ in  \eqref{eq:poly} are positive, and, thus, to complete the proof, it remains to check that $\det M_{\mathcal I}>0$ for any index subset ${\mathcal I}$.
  For the latter, recalling the columns $M_\alpha$ and $W$, and the construction of $M_{\mathcal I}$,
  one concludes that
\begin{equation}\label{M_I_det}
      \det M_{\mathcal I}
        = \Bigl(\prod_{k\not\in {\mathcal I}} c_k \Bigr)\,
            \det
 \pmtrx{\theta_1^{\beta_1} & \theta_1^{\beta_2} & \cdots &\theta_1^{\beta_m}\\
              \theta_2^{\beta_1} & \theta_2^{\beta_2} & \cdots &\theta_2^{\beta_m}\\
              \vdots\ &\vdots\ && \vdots\ \\
              \theta_m^{\beta_1} & \theta_m^{\beta_2} &\cdots & \theta_m^{\beta_m}}
            ,
            \qquad
            \beta_k:= \begin{cases}
                    k-\alpha & \mbox{if}\;\;k\not\in {\mathcal I},\\
                    k &\mbox{if}\;\; k\in \mathcal I.
                  \end{cases}
 \end{equation}
    Recall that, by \eqref{matr3}, all $c_k>0$, so
     the positivity of $\det M_{\mathcal I}$ is equivalent to the positivity
     of the above determinant involving
    $0<\beta_1<\beta_2<\dots<\beta_m$ and $0<\theta_1<\theta_2<\dots<\theta_m$.
    This is, in fact, the determinant of a so-called generalised Vandermonde matrix \cite{Heineman29},
    the positivity of which is established in \cite{RS00, YWZ01}.
    The desired assertion that $\det M_{\mathcal I}>0$ follows, which completes the proof.
  \end{proof}

  \begin{remark}[Characteristic polynomial \eqref{eq:poly} for $m=2,3$]\label{rem_poly}
 To illustrate the representation of type \eqref{eq:poly2} in the above proof,
  note that for $m=2$ it simplifies to
      \begin{align*}
      \det(M_\alpha-\lambda W)
        &= \lambda^2\vmtrx{\theta_1&\theta_1^2\\[1pt]
                      \theta_2&\theta_2^2}
           -\lambda\left( \vmtrx{\theta_1&c_2\theta_1^{2-\alpha}\\[1pt]
                              \theta_2&c_2\theta_2^{2-\alpha}}
                  +\vmtrx{c_1\theta_1^{1-\alpha}&\theta_1^2\\[1pt]
                              c_1\theta_2^{1-\alpha}&\theta_2^2}\right)
                  +\vmtrx{c_1\theta_1^{1-\alpha} & c_2\theta_1^{2-\alpha}\\[1pt]
                              c_1\theta_2^{1-\alpha} & c_2\theta_2^{2-\alpha}}\\
      &=\lambda^2\,\Bigl[\theta_1\theta_2(\theta_2-\theta_1)\Bigr]\\
      &{}-\lambda\Bigl[c_2\,\theta_1\theta_2(\theta_2^{1-\alpha}-\theta_1^{1-\alpha})
      +c_1(\theta_1\theta_2)^{1-\alpha}(\theta_2^{1+\alpha}-\theta_1^{1+\alpha})\Bigr]\\&{}
      +\Bigl[c_1c_2(\theta_1\theta_2)^{1-\alpha}(\theta_2-\theta_1)\Bigr],
    \end{align*}
  so we immediately see the coefficients (in square brackets) are positive for any $\alpha\in(0,1)$ and $0<\theta_1<\theta_2$.

    For $m=3$, the cubic polynomial $\det(M_\alpha-\lambda W)$ in \eqref{eq:poly} reads
    as $(-\lambda)^3a_3+(-\lambda)^2a_2+(-\lambda)a_1+a_0$,
    where
    \begin{align*}
      a_3=
        & \vmtrx{\theta_1&\theta_1^2&\theta_1^3\\[1pt]
                      \theta_2&\theta_2^2&\theta_2^3\\[1pt]
                      \theta_3&\theta_3^2&\theta_3^3},
                      \qquad\quad
a_0=           \vmtrx{c_1\theta_1^{1-\alpha} & c_2\theta_1^{2-\alpha} & c_3\theta_1^{3-\alpha}\\[1pt]
                       c_1\theta_2^{1-\alpha} & c_2\theta_2^{2-\alpha} & c_3\theta_2^{3-\alpha}\\[1pt]
                       c_1\theta_3^{1-\alpha} & c_2\theta_3^{2-\alpha} & c_3\theta_3^{3-\alpha}},                      \\
        a_2=&
              \vmtrx{c_1\theta_1^{1-\alpha}&\theta_1^2&\theta_1^3\\[1pt]
                         c_1\theta_2^{1-\alpha}&\theta_2^2&\theta_2^3\\[1pt]
                         c_1\theta_3^{1-\alpha}&\theta_3^2&\theta_3^3}
             +\vmtrx{\theta_1&c_2\theta_1^{2-\alpha}&\theta_1^3\\[1pt]
                         \theta_2&c_2\theta_2^{2-\alpha}&\theta_2^3\\[1pt]
                         \theta_3&c_2\theta_3^{2-\alpha}&\theta_3^3}
             +\vmtrx{\theta_1&\theta_1^2&c_3\theta_1^{3-\alpha}\\[1pt]
                         \theta_2&\theta_2^2&c_3\theta_2^{3-\alpha}\\[1pt]
                         \theta_3&\theta_3^2&c_3\theta_3^{3-\alpha}},
           \\ a_1=&
              \vmtrx{\theta_1               & c_2\theta_1^{2-\alpha} & c_3\theta_1^{3-\alpha}\\[1pt]
                         \theta_2               & c_2\theta_2^{2-\alpha} & c_3\theta_2^{3-\alpha}\\[1pt]
                         \theta_3               & c_2\theta_3^{2-\alpha} & c_3\theta_3^{3-\alpha}}
            +\vmtrx{c_1\theta_1^{1-\alpha} & \theta_1^2             & c_3\theta_1^{3-\alpha}\\[1pt]
                         c_1\theta_2^{1-\alpha} & \theta_2^2             & c_3\theta_2^{3-\alpha}\\[1pt]
                         c_1\theta_3^{1-\alpha} & \theta_3^2             & c_3\theta_3^{3-\alpha}}
             +\vmtrx{c_1\theta_1^{1-\alpha} & c_2\theta_1^{2-\alpha} & \theta_1^3\\[1pt]
                         c_1\theta_2^{1-\alpha} & c_2\theta_2^{2-\alpha} & \theta_2^3\\[1pt]
                         c_1\theta_3^{1-\alpha} & c_2\theta_3^{2-\alpha} & \theta_3^3}.
    \end{align*}
    After removing common positive factors in all columns and rows, and
    \new{setting
    $\theta_3=1$ (for $\theta_3<1$ see Remark~\ref{rem_theta_1})}, all eight above determinants reduce to  the following general determinant,
     where $\beta_2\in\{1-\alpha,1,1+ \alpha\}$, $\beta_3\in\{2-\alpha,2,2+\alpha\}$, and so $0<\beta_2<\beta_3$:
    \[
      \begin{vmatrix}
        1 & \,\theta_1^{\beta_2} & \,\theta_1^{\beta_3}\\[1pt]
        1 & \theta_2^{\beta_2} & \theta_2^{\beta_3}\\[1pt]
        1 & 1 & 1
      \end{vmatrix}
      = \begin{vmatrix}
          1 & \ \theta_1^{\beta_2}-1 & \ \theta_1^{\beta_3}-1\\[1pt]
          1 & \theta_2^{\beta_2}-1 & \theta_2^{\beta_3}-1\\[1pt]
          1 & 0 & 0
        \end{vmatrix}
      = (1-\theta_1^{\beta_2})(1-\theta_2^{\beta_2})
        \left(
          \frac{1-\theta_2^{\beta_3}}{1-\theta_2^{\beta_2}}-\frac{1-\theta_1^{\beta_3}}{1-\theta_1^{\beta_2}}
        \right).
    \]
    Although the positivity of the above determinant follows from \cite{RS00, YWZ01}, one
    can show this directly by checking that the function $ g(s):=\frac{1-s^{\beta}}{1-s}$, where
    $\beta:=\beta_3/\beta_2>1$, is increasing $\forall\,s\in(0,1)$.
    Note that $g'(s)>0$ is equivalent to $\beta s^{\beta-1}(1-s)<1-s^\beta$, or
    $\hat g(s):=\beta s^{\beta-1}-(\beta-1)s^\beta<1$.
    The latter property follows from $\hat g(1)=1$ combined with
    $\hat g'(s):=\beta(\beta-1) s^{\beta-2}(1-s)>0$ for $s\in(0,1)$.
  \end{remark}

  \begin{remark}[Case $\alpha=1$]
    Setting $\alpha=1$ in \eqref{eq:problem} and throughout our evaluations,
    the discretization \eqref{matrix_col_U} becomes the collocation method for the classical parabolic differential equation
    \[
      \partial_t u +\LL u = f(x,t).
    \]
    Note that in this case \eqref{matr3} yields
    $c_j=\frac{\Gamma(j+1)}{\Gamma(j+1-\alpha)}=j$, while
    \eqref{eq:poly2} and \eqref{M_I_det} remain unchanged.
    However, now
    we only have $\beta_1\leq\beta_2\leq\dots\leq\beta_m$, and if there is one pair of equal powers, the corresponding determinant $\det M_{\mathcal I}$ is zero.
    Nevertheless,
 the sum
$\sum_{\#{\mathcal I}=j}\! \det M_{\mathcal I}$
    in \eqref{eq:poly2}
for each $j=1,\ldots, m$
includes exactly one positive determinant,
which corresponds to ${\mathcal I}=\{m-j+1, \ldots, m\}$ (understood as ${\mathcal I}=\emptyset$
if $j=0$).
Hence, each such sum is positive, so
$a_j>0$ $\forall\, j=0,\ldots,m$, so
Theorem~\ref{the_cor} remains valid for the classical parabolic case $\alpha=1$ with any $m\ge 1$
and any set of collocation points $0<\theta_1 <\theta_2< \dots < \theta_m\new{\leq1}$.
  \end{remark}

\subsection{A semi-computational approach}\label{ssec_comp}
In addition to Theorem~\ref{the_cor}, which guarantees
that the $m\times m$ matrix  $M=D_1 WD_2W^{-1}$ from
 Theorem~\ref{the_M_matrix} has no real negative eigenvalues
(and, hence,
existence and uniqueness of collocation solutions for any $m\ge 1$
 and any set of  collocation points $\{ \theta_\ell\}$), it may be of interest to have more precise  information about the eigenvalues of this matrix (in terms of where they lie on the complex plain).
In most practical situations, one is interested in a particular distribution of the collocations points (such as equidistant points), in which case
 all eigenvalues of $M$ for all $\alpha\in(0,1)$
are easily computable, as we now describe.

  \begin{lstlisting}[float,caption=Compute spectrum of collocation matrix for fractional derivative,  basicstyle=\footnotesize, escapechar=@, label=alg:compEV]
function sigma = spectrum(m,alpha)
% t = (1:m)/m;                            % equidistant points
t  = (cos(pi*(m-1:-1:0)/m)+1)/2;          % Chebyshev
D1 = diag(t.^(-alpha));
W  = (t').^(1:m);
D2 = diag(gamma(2:m+1)./gamma((2:m+1)-alpha));
M  = D1*W*D2;                             % matrix
sigma = eig(M,W);                         % compute all (gen) EV
\end{lstlisting}

  Given the order $m$ of the collocation method and a particular distribution of the collocations points,
  the computation of approximate eigenvalues for the entire range $\alpha\in(0,1)$
  can be easily performed using designated commands typically available in high-level programming languages
  (such as the command \texttt{eig} in Matlab).
  Here we provide a Matlab script, see Script~\ref{alg:compEV}, that computes, for any $\alpha$ and $m$, all eigenvalues of $M=D_1 WD_2W^{-1}$.
  Recall that Theorem~\ref{the_M_matrix} requires that this matrix has no real negative eigenvalues.
  Thus, such a code allows one to immediately check the applicability of the well-posedness result of Theorem~\ref{the_M_matrix} for any fixed $m$ and collocation point set $\{\theta_\ell\}$ of interest (the latter is denoted by \texttt{t} in Script~\ref{alg:compEV}).

  \begin{figure}[bpt]
    \begin{center}
    \ifusetikz
      \input{cheb_2} 
      \input{cheb_3}
      \input{cheb_5}
      \input{cheb_8}
    \else
       \includegraphics{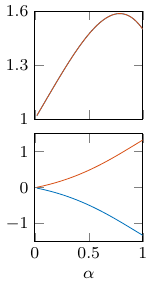}
       \includegraphics{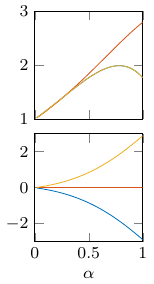}
       \includegraphics{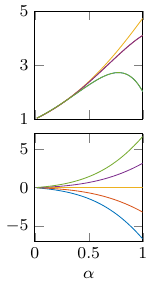}
       \includegraphics{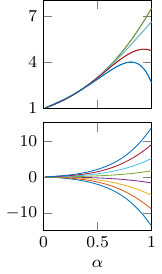}
    \fi
    \end{center}\vspace{-0.6cm}
    \caption{Eigenvalues for collocation methods using Chebyshev points $m\in\{2,3,5,8\}$ (left to right) and real parts (top), imaginary parts (bottom)\label{fig:plotEVcheb}}
  \end{figure}
  \begin{figure}[tp]
    \begin{center}
    \ifusetikz
      \input{uni_2} 
      \input{uni_3}
      \input{uni_5}
      \input{uni_8}
    \else
       \includegraphics{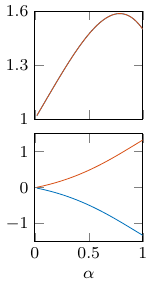}
       \includegraphics{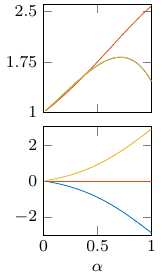}
       \includegraphics{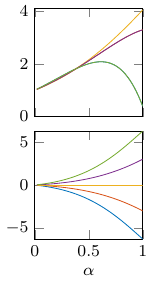}
       \includegraphics{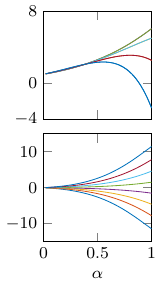}
    \fi
    \end{center}\vspace{-0.6cm}
    \caption{Eigenvalues for collocation methods using equidistant points and $m\in\{2,3,5,8\}$ (left to right) and real parts (top), imaginary parts (bottom)\label{fig:plotEVuni}}
  \end{figure}
  \begin{figure}[htb]
    \begin{center}
    \ifusetikz
      \input{lob_2} 
      \input{lob_3}
      \input{lob_5}
      \input{lob_8}
    \else
       \includegraphics{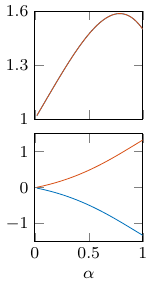}
       \includegraphics{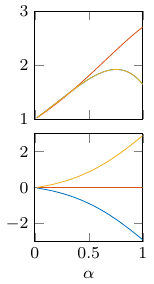}
       \includegraphics{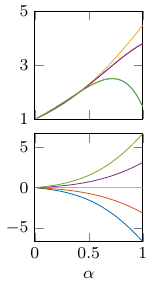}
       \includegraphics{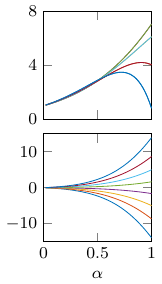}
    \fi
    \end{center}\vspace{-0.6cm}
    \caption{Eigenvalues for collocation methods using Lobatto points and $m\in\{2,3,5,8\}$ (left to right) and real parts (top), imaginary parts (bottom)\label{fig:plotEVlob}}
  \end{figure}

  In Figures~\ref{fig:plotEVcheb}--\ref{fig:plotEVlob} we plot the eigenvalues for $m\in\{2,3,5,8\}$
  with, respectively, Chebyshev, equidistant, and Lobatto distributions of collocation points.

  In addition to the presented graphs, we have numerically investigated the eigenvalues  for $m\le 20$
  and can report the following observations.
  \begin{itemize}
    \item None of the eigenvalues is a real negative number
    whether Chebyshev, Lobatto, or equidistant collocation point sets are used \new{for} $m\le 20$.

    \item If either Chebyshev or Lobatto collocation point sets are used for $m\le 20$, not only
          there are no real negative eigenvalues, but
          the real parts of all eigenvalues are positive.
    %
    \item For equidistant collocation points we observe
    some eigenvalues with negative real parts for some $m$; see, e.g., the case $m=8$ \new{in} Figure~\ref{fig:plotEVuni}.

    \item For odd $m$ one observes exactly one positive eigenvalue, all others are complex-conjugate pairs,
          while for even $m$ we only have complex-conjugate pairs.
  \end{itemize}
  Overall, the above observations confirm the well-posedness of the considered collocation scheme for
  three popular distributions of collocation points for all $m\le 20$.

\section{Semilinear case}\label{sec_semi}

  Consider a semilinear version of~\eqref{eq:problem} of the form
  \begin{equation}\label{eq:problem_sem}
    \pt_t^{\alpha}u+\LL u=f(x,t,u)\qquad
    \mbox{where}\;\; |\partial_uf(\cdot,\cdot,\cdot)|\le\mu\;\;\mbox{in}\;\Omega\times (0,T]\times\R,
  \end{equation}
  for some positive constant $\mu$.
  Without loss of generality, we shall assume that $c(x)=0$ in the definition~\eqref{LL_def} of $\LL$
  (as the term $c(x)u$ can now be included in the semilinear term $f(x,t,u)$).

  For the above semilinear equation, the collocation method of type
  \eqref{Col_method} reads as
  \begin{equation}\label{Col_method_semi}
    (\partial_t^\alpha+\LL)\,U(x, t^\ell_k)=f(x, t_k^\ell, U(x, t^\ell_k))\quad \mbox{for}\;\; x\in\Omega,\;\;  \ell\in\{1,\ldots, m\},\;\;k\ge 1.
  \end{equation}
  Assuming that the above system has a solution, a standard linearization yields
  \[
    f(x, t_k^\ell, U(x, t^\ell_k))=f(x, t_k^\ell, 0)+c(x,t^\ell_k)\,U(x, t^\ell_k),
  \]
  where $|c(x,t^\ell_k)|\le \mu$, so \eqref{Col_method_semi} 
  shows even more resemblance with \eqref{Col_method}:
  \[
    (\partial_t^\alpha+\LL-c(x,t^\ell_k))\,U(x, t^\ell_k)=f(x, t_k^\ell, 0).
  \]
  Hence, recalling Lemma~\ref{lem_matr} and Remark~\ref{rem_matrix_L_t}, we arrive at the following version of~\eqref{matrix_col}
  for $\vec V=\vec V(x)=W^{-1}\vec U(x)$:
  \begin{equation}\label{coersive_semi}
    \bigl(D_1 WD_2+\tau^\alpha\LL W\bigr)\,\vec{V} = \vec{\mathcal{F}}\vec U,
    \qquad\;\;
    \vec U = W \vec{V},
  \end{equation}
 with
  $\vec{\mathcal{F}}\vec U=\vec{F}+\tau^\alpha D_c \vec{U}$,  where
  $\vec{F}=\vec{F}(x)$ remains as in \eqref{matrix_col} except for it is now defined using $f(\cdot,\cdot, 0)$ in place of $f(\cdot,\cdot)$,
  while
  \[
    D_c:={\rm diag}(\hat c(x,\theta_1),\cdots,\hat c(x,\theta_m)),
    \qquad
    \hat c(\cdot, t):=c(\cdot,(t-t_{k-1})/\tau_k).
  \]
  The similarity stops here, as we cannot assume the positivity of $\hat c$, and instead rely on $|\hat c|\le\mu$ (not to mention that $D_c$ also depends on $\vec U$).

  Note also that $\vec{\mathcal{F}}$ 
  satisfies
  $\vec{\mathcal{F}}\vec U-\vec{\mathcal{F}}\vec U^*=\tau^\alpha D_c^* (\vec U-\vec U^*)$
  $\forall\, \vec U,\,\vec U^*\in(L_2(\Omega))^m$,
  where $D_c^*$ is an $m\times m$ diagonal matrix,  all elements of which satisfy $|\hat c^*_j(x)|\le \mu$.

  Hence, a version of  Theorem~\ref{the_LaxM} remains applicable
  under the stronger condition that the matrix
  $W^{\top}D WD_2+(W^{\top}D WD_2)^{\top}$ is positive-definite
  (rather that positive-semidefinite), and the additional condition
  $\mu\tau^\alpha<C_{\mathcal A}$, where
  the constant $C_{\mathcal A}$ is related to the coercivity of the first term in \eqref{AA_from_new}.
  (Note also that now in Corollary~\ref{cor_m_2} (for $m=2$) we need to impose the strict inequality
  $\theta_1<\theta^*=1-\frac12\alpha$.)

  To be more precise,
\new{$\tau$ is assumed} sufficiently small to ensure that
  $\tau^\alpha \mu \langle D_0  W\vec V, W\vec V \rangle$ is strictly dominated by the first term  in \eqref{AA_from_new}.
  \new{Thus, for some $\epsilon>0$ one has $\AA (\vec V, \vec V)\ge(1+\epsilon)\,\tau^\alpha \mu \langle D_0  \vec U, \vec U \rangle$.
  Then
  one can show} that
  the operator $(D_1 WD_2W^{-1}+\tau^\alpha\LL )^{-1}\vec{\mathcal{F}}$ is a strict contraction on $(L_2(\Omega))^m$
  (in the norm induced by the $m\times m$  matrix $D_0$), so Banach's fixed point theorem
  \cite[section 9.2.1]{evans} yields
  existence and uniqueness of a solution $\vec U$ (for the regularity of which one can use the original  Theorem~\ref{the_LaxM}).

  For the applicability of Theorem~\ref{the_M_matrix}, rewrite \eqref{coersive_semi}
  as $(M+\tau^\alpha\LL)\vec U=\vec{\mathcal{F}}\vec U$,
  where $M=D_1 WD_2W^{-1}$.
  Now, in view of \eqref{resolvent}, \mbox{$\tau^\alpha \mu C_M<1$} guarantees that the operator $(M+\tau^\alpha\LL)^{-1}\vec{\mathcal{F}}=R(\tau^\alpha\LL\,;M)\vec{\mathcal{F}}$ is a strict contraction on $(L_2(\Omega))^m$, so Banach's fixed point theorem \cite[section 9.2.1]{evans} again yields
  existence and uniqueness of a solution $\vec U$ in this case.

\section*{Conclusion}
  We have considered continuous collocation discretizations in time for
  time-fractional parabolic equations with a Caputo time derivative of order $\alpha\in(0,1)$.
  For such discretizations,  sufficient conditions for existence and uniqueness of their solutions
  are obtained using two approaches: the Lax-Milgram Theorem
  and the eigenfunction expansion; see Theorems~\ref{the_LaxM} and~\ref{the_M_matrix}.
The resulting sufficient conditions, which involve certain $m\times m$ matrices,
  are verified
      both
       analytically, for all $m\ge 1$ and all sets of collocation points
       (see Theorem~\ref{the_cor}),  and computationally, for
        three popular distributions of collocation points and all $ m\le 20$.
  Furthermore, the above results are extended to the semilinear case.

\section*{Acknowledgement}

%

The authors are grateful to Prof. Hui Liang of Harbin Institute of Technology, Shenzhen, for raising the question of the existence of collocation solutions,
which initiated the study presented in this paper.

%

   \section*{Declarations}
   \textbf{Conflicts of interest/Competing interests} The authors have no competing interests to declare that are relevant to the content of this article.\\
   \textbf{Data availability}
   The datasets generated during and analysed during the current study are available from the corresponding author on reasonable request.\\
   \textbf{Code availability}
   The software is available from the corresponding author on reasonable request.

  \bibliographystyle{plain}
  \bibliography{lit}

\end{document}